 \newcommand{\R}{\mathbb{R}}
 \newcommand{\C}{\mathbb{C}}
 \newcommand{\ROM}[1]{\mathrm{\uppercase\expandafter{\romannumeral#1}}}
  \theoremstyle{definition}
   \numberwithin{equation}{section}
   \theoremstyle{plain}
 \newtheorem{thm}{Theorem}
 \newtheorem{cor}{Corollary}
 \newtheorem{rem}{Remark}
 \newtheorem{prop}{Proposition}
\newtheorem{ack}{Acknowledgements}   
\title[Isoparametric hypersurfaces and complex structures
]{\textbf{Isoparametric hypersurfaces and complex structures}}
\author[Z. Z. Tang]{Zizhou Tang}\address{Chern Institute of Mathematics $\&$ LPMC, Nankai University, Tianjin 300071, P. R. China}
\email{zztang@nankai.edu.cn}
\author[W. J. Yan]{Wenjiao Yan}\address{School of Mathematical Sciences, Laboratory of Mathematics and Complex Systems, Beijing Normal University, Beijing, 100875, P. R. China}
\email{wjyan@bnu.edu.cn}
\thanks {The project is partially supported by the NSFC (No.11871282, 11931007), BNSF (Z190003), and Nankai Zhide Foundation. 
}
\subjclass[2010] { 53C15, 53C30, 53C40.}
\keywords{almost complex structure, integrable, Hermitian structure, isoparametric hypersurface.}
\begin{document}

\maketitle
\begin{center}
Dedicated to Professor Banghe Li on the occasion of his 80th birthday
\end{center}
\begin{abstract}
The main purpose of this note is to construct almost complex or complex structures on certain isoparametric hypersurfaces in unit spheres. As a consequence, complex
structures on $S^1\times S^7\times S^6$, and on
$S^1\times S^3\times S^2$ with vanishing first Chern class, are built.
\end{abstract}


An almost complex manifold is a smooth manifold $M^{2n}$ that admits an almost complex structure $J$, namely, an endomorphism $J$ of the tangent bundle $TM$ with $J^2=-I$. On such an $M^{2n}$, one can locally choose a field of $U$-bases $e_1,\cdots, e_{2n}$ satisfying
\begin{equation*}\label{J0}
J(e_1,e_2,...,e_{2n}) = (e_1,e_2,...,e_{2n})J_0,
\end{equation*}
where $J_0 =\begin{pmatrix}0&-I_n\\I_n&0\end{pmatrix}$. 
The existence problem for an almost complex structure on a given oriented manifold $M^{2n}$ is a purely topological one, and is understood
relatively well. For example, it is well known that the only spheres that admit
almost complex structures are $S^2$ and $S^6$.

A complex manifold admits a natural almost complex structure induced by the holomorphic coordinates, i.e., the one that is called a complex structure. If an almost complex structure $J$ is induced from a complex structure, it is said to be integrable. There are various examples of complex manifolds: $2$-dimensional orientable surfaces, the Hopf manifold with underlying spaces $S^1\times S^{2n-1}$ (see for example \cite{Che79}, \cite{PT05}), the Calabi-Eckmann manifold with underlying spaces $S^{2p+1}\times S^{2q+1}$ $(p,q\geq 1)$ (cf. \cite{CE53}), and the manifold $S^1\times \Sigma^{2n-1} $, where $\Sigma^{2n-1}$ is a homotopy sphere bounding a parallelizable manifold (cf. \cite{BV68}), etc..
However, the very natural almost complex structure on $S^6$ induced by octonionic multiplication is not integrable. It is challenging and still an open question as to whether or not $S^6$ admits a complex structure. One is also interested in the existence or non-existence of complex structures on $S^2\times S^6$, $S^1\times S^1\times S^6$, $S^6\times S^6$, even on $P \times S^6 $, the product of any even dimensional closed manifold $P$ with $S^6$.

In this paper, we construct almost complex structures on certain isoparametric hypersurfaces (those with constant principal curvatures) in unit spheres. The classification of isoparametric hypersurfaces  in unit spheres was accomplished recently (cf. \cite{CCJ07}, \cite{Imm08},  \cite{Miy13}, \cite{Chi13}, \cite{Chi20}).
Among of these, the ones with $4$ distinct principal curvatures, especially of the OT-FKM type, carry the most abundant topology and geometry. 
For a symmetric Clifford system $\{P_0,\cdots, P_m\}$ on $\mathbb{R}^{2l}$, i.e., $P_{i}$'s are symmetric matrices satisfying $P_{i}P_{j}+P_{j}P_{i}=2\delta_{ij}I_{2l}$, the following homogeneous polynomial $F$ of degree $4$ on
$\mathbb{R}^{2l}$ is constructed (cf.  \cite{FKM81}):
\begin{equation*}\label{FKM isop. poly.}
\begin{split}
&\qquad F:\quad \mathbb{R}^{2l}\longrightarrow \mathbb{R}\\
&F(x) = |x|^4 - 2\sum_{i = 0}^{m}{\langle
P_{i}x,x\rangle^2}.
\end{split}
\end{equation*}
The restricted function $f=F|_{S^{2l-1}}$ has $\mathrm{Im}f=[-1,1]$. 
A regular level set $M^{2l-2}$ of $f$ is called \emph{an isoparametric hypersurface of OT-FKM type}. The singular level sets $M_+=f^{-1}(1)$, $M_-=f^{-1}(-1)$ are called \emph{focal submanifolds}. The multiplicities of the principal curvatures of $M^{2l-2}$ are $(m_1, m_2, m_1, m_2)$, where $(m_1, m_2)=(m, l-m-1)$ with
$l=k\delta(m)$, $k$ is a positive integer and $\delta(m)$ is the dimension of the irreducible module of the Clifford algebra $\mathcal{C}_{m-1}$, which is valued as
\begin{center}
\begin{tabular}{|c|c|c|c|c|c|c|c|c|cc|}
\hline
$m$ & 1 & 2 & 3 & 4 & 5 & 6 & 7 & 8 & $\cdots$ &$m$+8 \\
\hline
$\delta(m)$ & 1 & 2 & 4 & 4 & 8 & 8 & 8 & 8 &$\cdots$ &16$\delta(m)$\\
\hline
\end{tabular} .
\end{center}
The isoparametric hypersurface
$M^{2l-2}$ is an $S^{m_1}$ $(S^{m_2})$ bundle over $M_+$ $(M_-)$. In particular, the normal bundle of $M_+$ in $S^{2l-1}$ is trivial with $(P_0x,\cdots, P_mx)$ as a normal field. Hence, $M^{2l-2}$ is diffeomorphic to $S^m\times M_+$. 

First, we construct an almost complex structure $J$ on the isoparametric hypersurface $M_0=F^{-1}(0)\cap S^{2l-1}$.  With respect to the induced metric $ds^2$, we show the following:

\begin{prop}\label{M0}
 $(M_0, J, ds^2)$ is an almost Hermitian manifold.
\end{prop}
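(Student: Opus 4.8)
The plan is to unpack the almost Hermitian condition into its two ingredients and treat them separately: that $J$ is a genuine almost complex structure on $M_0$ (so that $J$ preserves $TM_0$ and $J^2=-\mathrm{Id}$), and that $J$ is compatible with the induced metric, i.e. $ds^2(JX,JY)=ds^2(X,Y)$ for all $X,Y\in T_xM_0$. Since $ds^2$ is just the restriction to $M_0$ of the flat inner product $\langle\,\cdot\,,\cdot\,\rangle$ of $\R^{2l}$, and since every $P_i$ is a symmetric orthogonal involution (so that all products of the $P_i$ are again orthogonal), any endomorphism assembled from the Clifford data will automatically be an isometry of $\R^{2l}$. Consequently the compatibility $ds^2(JX,JY)=\langle X,Y\rangle$ will come essentially for free once $J$ is known to be built from such isometries and to preserve $T_xM_0$; the genuine content of the proposition is therefore the \emph{tangency} of $J$, and that is the step I expect to be the main obstacle.

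First I would fix the pointwise linear algebra. Writing $\rho_i=\langle P_ix,x\rangle$ and $P_\rho=\sum_i\rho_i P_i$, the gradient $\nabla F=4x-8P_\rho x$ together with the level-set equation $\sum_i\rho_i^2=\tfrac12$ (which is exactly $F=0$ on the sphere) gives $\langle\nabla F,x\rangle=0$ on $M_0$, so the normal space of $M_0$ in $\R^{2l}$ is the $2$-plane $W=\mathrm{span}\{x,\,P_\rho x\}$ and $T_xM_0=W^\perp$. A quick check shows no fixed ambient complex structure can preserve $W$ for all $x$, since that would force $Jx=\pm(2P_\rho-\mathrm{Id})x$, which is not linear in $x$; hence $J$ must depend on the point. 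The construction I would use takes a complex structure $J_0$ of $\R^{2l}$ commuting with every $P_i$; then, using $J_0^{\mathsf T}=-J_0$, $J_0P_\rho=P_\rho J_0$ and $P_\rho^{\mathsf T}=P_\rho$, one finds $\langle J_0x,P_\rho x\rangle=-\langle J_0x,P_\rho x\rangle$, so $J_0x\perp x$ and $J_0x\perp P_\rho x$, i.e. $J_0$ maps $W$ into $T_xM_0$. Orthogonality of $J_0$ then yields the decomposition $T_xM_0=J_0W\oplus U$ with $U=(W\oplus J_0W)^\perp$ a $J_0$-invariant subspace, and I would set $J=J_0$ on $U$ while replacing $J_0$ on the exceptional plane $J_0W$ (where it would point back into the normal directions) by the $90^\circ$ rotation determined by the orthonormal frame $\{J_0x,\,J_0\xi\}$, $\xi$ the unit normal of $M_0$ in the sphere. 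This produces an orthogonal $J$ with $J^2=-\mathrm{Id}$ on each summand and $J(T_xM_0)=T_xM_0$.

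Finally I would assemble the pieces. Because $U$ and $J_0W$ are mutually orthogonal and $J$ acts as an orthogonal transformation on each, $J$ is orthogonal on all of $T_xM_0$, so $ds^2(JX,JY)=\langle JX,JY\rangle=\langle X,Y\rangle=ds^2(X,Y)$; combined with $J^2=-\mathrm{Id}$ this is precisely the almost Hermitian condition, proving the proposition. The hard part, as anticipated, is the tangency bookkeeping in the middle step: one must verify that the plane $J_0W$ and the chosen rotation vary smoothly with $x$ over all of $M_0$, that the frame $\{J_0x,J_0\xi\}$ never degenerates, and that a commuting complex structure $J_0$ exists at all — a requirement on the Clifford system that confines the construction to the admissible $\{P_0,\dots,P_m\}$. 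Once tangency and the orthogonality of the building blocks are in hand, the metric compatibility needed for the Hermitian conclusion is immediate.
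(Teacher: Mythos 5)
Your overall architecture is in fact the same as the paper's: split off an exceptional $2$-plane inside $T_xM_0$, define $J$ there as a $90^\circ$ rotation, and let $J$ act on the orthogonal complement by a fixed ambient Clifford isometry; your preliminary reductions (normal space $W=\mathrm{span}\{x,P_\rho x\}$ in $\R^{2l}$, metric compatibility being automatic once $J$ is tangent and assembled from orthogonal pieces) are correct. The genuine gap is your pivotal existence assumption: a complex structure $J_0$ on $\R^{2l}$ commuting with \emph{every} $P_i$ need not exist, and you cannot wave this away as ``confining the construction to the admissible $\{P_0,\dots,P_m\}$'' — the proposition is asserted for every symmetric Clifford system of OT-FKM type. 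Concretely, for $m=1$ take the standard system $P_0=\begin{pmatrix} I_l&0\\0&-I_l\end{pmatrix}$, $P_1=\begin{pmatrix} 0&I_l\\I_l&0\end{pmatrix}$: commuting with $P_0$ forces block-diagonal form $\mathrm{diag}(A,B)$ and commuting with $P_1$ forces $A=B$, so the commutant is $\{\mathrm{diag}(A,A):A\in\mathfrak{gl}(l,\R)\}$, which contains a square root of $-I$ if and only if $l$ is even. The admissible case $l=3$ (multiplicities $(1,1)$, $M_0^4\subset S^5$), and more generally every odd $l$ with $m=1$, therefore admits no such $J_0$, and your construction never gets off the ground there, even though $M_0^{2l-2}$ is even-dimensional and the proposition still holds.

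The paper's fix is exactly the substitution you were one step away from: take $J_0=P_0P_1$, which always exists, is orthogonal and skew-symmetric with $(P_0P_1)^2=-I$, but \emph{anti}commutes with $P_0$ and $P_1$ (it commutes with $P_i$ only for $i\ge 2$). Tangency of the exceptional plane then follows from a cancellation rather than from commutation: with $\xi=x-2\sum_i\rho_iP_ix$ one computes $\langle P_0P_1x,x\rangle=0$ by skewness, $\langle P_0P_1x,P_0x\rangle=\rho_1$, $\langle P_0P_1x,P_1x\rangle=-\rho_0$, and $\langle P_0P_1x,P_ix\rangle=0$ for $i\ge2$ (since $P_0P_1P_i$ is skew-symmetric), whence
\begin{equation*}
\langle P_0P_1x,\xi\rangle=-2\bigl(\rho_0\rho_1-\rho_1\rho_0\bigr)=0,
\end{equation*}
and similarly $\eta_2:=P_0P_1\xi$ is tangent; the complement $E$ of $\mathrm{span}\{\eta_1,\eta_2\}$ in $T_xM_0$ is then $P_0P_1$-invariant because $P_0P_1$ is orthogonal and skew (e.g.\ $\langle P_0P_1X,\eta_1\rangle=\langle X,x\rangle=0$ for $X\in E$). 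With $J_0=P_0P_1$ in place of your hypothetical commuting $J_0$, your argument — rotation $\eta_1\mapsto\eta_2$ on the exceptional plane, $P_0P_1$ on $E$, orthogonality of all pieces giving the Hermitian compatibility — is precisely the paper's proof, now valid for every symmetric Clifford system with $m\ge 1$.
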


\begin{proof}
$M_0$ is a regular level set of $f$, thus a unit normal vector at $x$ can be chosen as
$$\xi(x)=\frac{\nabla f}{|\nabla f|}=\frac{DF-\langle DF, x\rangle x}{|DF-\langle DF, x\rangle x|}=\frac{DF}{|DF|}=x-2\sum_{i = 0}^{m}\langle
P_{i}x,x\rangle P_ix,$$
where $\nabla $ is the spherical gradient and $D$ the Euclidean gradient.
 Let
$$\eta_1:=P_0P_1x, \quad \eta_2:=P_0P_1\xi.$$
Using the properties of Clifford matrices, we can verify that $\langle \eta_i, x\rangle=\langle \eta_i, \xi\rangle=0$ for $i=1,2$. Thus $\eta_1, \eta_2\in T_xM_0$ and the tangent space $T_xM_0$ is split into
$$T_xM_0=\textrm{Span}\{\eta_1\}\oplus \textrm{Span}\{\eta_2\}\oplus E.$$
For $X\in E$, it is easy to check directly that 
$P_0P_1X\in E$. This helps us to construct a global endomorphism of $TM_0$ as follows: 
\begin{equation}\label{F0}
\begin{split}
&\hspace{1cm}J:\quad T_xM_0\longrightarrow T_xM_0\\
& J\eta_1:=\eta_2, ~~J\eta_2:=-\eta_1, ~~JX:=P_0P_1X ~\textrm{for}~X\in E.
\end{split}
\end{equation}
It is obvious that $J^2=-I$, and is thus an almost complex structure on $M_0$. Clearly, $J$ is compatible with the induced metric $ds^2$ on $M_0$, and is thus almost Hermitian.

\end{proof}

\begin{rem}
The almost complex structure $J$ in (\ref{F0}) is a very natural one and its integrability will be investigated later. In the case $m=1$,  we will define a slightly different almost complex structure on $M$ and prove its integrability.
\end{rem}

\begin{rem}
One can also consider similar problems on other isoparametric hypersurfaces in unit spheres. For instance, Miyaoka \cite{Miy11} showed that the isoparametric hypersurface $M^{12}$ in $S^{13}$ with $6$ distinct principal curvatures has a K\"ahler structure with respect to the induced metric, and the Cartan isoparametric hypersurface $C_{\C}^6\cong SU(3)/T^2$ is a K\"ahler submanifold of $M^{12}$.
\end{rem}

From now on, we focus on the case $m=1$. Consider $N:=S^1\times M_+$ with the product metric; it is diffeomorphic to (but not isometric to) the isoparametric hypersurface $M$. Define
\begin{equation*}\label{e}
\varepsilon(a):=\sqrt{-1}a\in T_aS^1, ~~ \textrm{for}~ a\in S^1, \qquad\,\, ~~\eta(x):=P_0P_1x,~~ \textrm{for}~x\in M_+.
\end{equation*}
Clearly, $\eta(x)\in T_xM_+$. We split $T_xM_+$ as $T_xM_+=\mathrm{Span}\{\eta\}\oplus V$. For $X\in V$, we have
$\langle P_0P_1X, x\rangle=\langle P_0P_1X, P_0x\rangle=\langle P_0P_1X, P_1x\rangle=\langle P_0P_1X, \eta\rangle=0$, which implies $P_0P_1X\in V$. Based on these facts, we define an endomorphism $J$ of $TN$ as follows:
\begin{equation}\label{J}
\begin{split}
&\hspace{2cm}J:~~\, T_{(a, x)}N\longrightarrow T_{(a, x)}N\\
&J\varepsilon:=\eta, ~~\,J\eta:=-\varepsilon,\,\,\,\textrm{and}~JX:=P_0P_1X, ~\textrm{for}~ X\in V.
\end{split}
\end{equation}

\begin{thm}\label{m1}
When $m=1$, the almost complex structure $J$ on $S^1\times M_+$ defined in (\ref{J}) is integrable. Furthermore, $(S^1\times M_+, J, ds^2)$ is a Hermitian manifold, where $ds^2$ is the product metric on $S^1\times M_+$.
\end{thm}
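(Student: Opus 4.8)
The plan is to verify integrability through the Newlander--Nirenberg theorem, by exhibiting a local frame of the $+\sqrt{-1}$-eigenbundle $T^{1,0}N$ and checking that it is closed under the Lie bracket. The observation that drives everything is that for $m=1$ the product $J_0:=P_0P_1$ satisfies $J_0^2 = P_0P_1P_0P_1 = -P_0^2P_1^2 = -I$ and is orthogonal and skew-symmetric; hence $J_0$ is a constant, and therefore automatically integrable, orthogonal complex structure on the ambient $\mathbb{R}^{2l}\cong\mathbb{C}^l$. Note that $\eta(x)=J_0x$ and $JX=J_0X$ for $X\in V$, while on the remaining plane $J$ sends $\varepsilon\mapsto\eta\mapsto-\varepsilon$. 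I would first record that the circle action $\phi_t=e^{tJ_0}$, i.e. multiplication by $e^{\sqrt{-1}t}$ on $\mathbb{C}^l$, preserves $M_+$: one computes that the pair $(\langle P_0x,x\rangle,\langle P_1x,x\rangle)$ is merely rotated by $\phi_t$, so its zero locus $M_+$ is invariant, and $\eta$ is its infinitesimal generator. This presents $M_+\to M_+/S^1$ as a restriction of the Hopf fibration, with $V$ the horizontal, $J_0$-invariant distribution.

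Next I would set up the frame. A direct check gives $JZ=\sqrt{-1}\,Z$ for $Z:=\varepsilon-\sqrt{-1}\,\eta$ (using $J\varepsilon=\eta$, $J\eta=-\varepsilon$), and for $X\in V$ the field $W_X:=X-\sqrt{-1}\,J_0X$ satisfies $JW_X=\sqrt{-1}\,W_X$, equivalently $J_0W_X=\sqrt{-1}\,W_X$. Choosing a local frame $X_1,\dots,X_{l-2}$ for the complex bundle $(V,J_0)$, the fields $Z,W_{X_1},\dots,W_{X_{l-2}}$ frame $T^{1,0}N$. Since involutivity can be checked on frame fields, and since $\varepsilon$ and the $M_+$-fields live on different factors (so $[\varepsilon,W_X]=0$), the relevant brackets are $[Z,W_X]=-\sqrt{-1}\,[\eta,W_X]$ and $[W_X,W_Y]$. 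All computations are carried out in the ambient space, using that for fields tangent to $M_+$ the intrinsic bracket equals $D_AB-D_BA$, where $D$ is the Euclidean directional derivative extended complex-bilinearly.

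There are two things to control. First, the $J_0$-type: since $J_0$ is a constant endomorphism, differentiating $J_0W_X=\sqrt{-1}\,W_X$ along any direction shows that $D_\eta W_X$, $D_{W_X}W_Y$, and hence $[\eta,W_X]$ and $[W_X,W_Y]$, are again $\sqrt{-1}$-eigenvectors of $J_0$; this is precisely the trivial integrability of the ambient structure. The remaining point, which I expect to be the crux, is to show these brackets have no component along the Hopf direction $\eta$, for only then does the ambient relation $J_0=\sqrt{-1}$ upgrade to $J=\sqrt{-1}$ (recall $J\neq J_0$ on $\mathrm{Span}\{\eta\}$, since $J_0\eta=-x$ is radial). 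Writing the $\eta$-component of a field $u$ tangent to $M_+$ as $\langle u,J_0x\rangle\,\eta$ and differentiating the orthogonality relations $\langle W_X,J_0x\rangle=0$ and $\langle W_X,x\rangle=0$, the $\eta$-component of $[W_X,W_Y]$ reduces to $\langle W_X,J_0W_Y\rangle=\sqrt{-1}\,\langle W_X,W_Y\rangle$, which vanishes because two $(1,0)$-vectors satisfy $\langle W_X,W_Y\rangle=\langle J_0W_X,J_0W_Y\rangle=-\langle W_X,W_Y\rangle=0$, while that of $[\eta,W_X]$ reduces to $\langle W_X,x\rangle=0$ (as $V\perp x$). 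Thus $[\eta,W_X]$ and $[W_X,W_Y]$ lie in $V^{1,0}\subset T^{1,0}N$, the frame is involutive, and Newlander--Nirenberg yields integrability. Finally, because $J$ acts as the orthogonal map $J_0$ on $V$ and as a rotation by $\pi/2$ on the orthonormal plane $\mathrm{Span}\{\varepsilon,\eta\}$, preserving the orthogonal splitting $\mathrm{Span}\{\varepsilon,\eta\}\oplus V$, it is an isometry of each tangent space; hence $(S^1\times M_+,J,ds^2)$ is Hermitian.
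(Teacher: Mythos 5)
Your proof is correct, but it takes a genuinely different route from the paper's. Both arguments rest on the same key observation --- that for $m=1$ the product $P_0P_1$ is a constant orthogonal, skew-symmetric complex structure on the ambient $\mathbb{R}^{2l}\cong\mathbb{C}^{l}$, with $P_0P_1x=\eta$, $P_0P_1\eta=-x$ and $P_0P_1V=V$ --- but they exploit it differently. The paper avoids Newlander--Nirenberg entirely: it introduces the local embedding $\Phi(a,x)=a^{-\sqrt{-1}}x$ of $(S^1\setminus\{1\})\times M_+$ into $\mathbb{R}^{2k}$, whose image is an open piece of the cone over $M_+$ --- a complex submanifold for the constant structure $J_{\mathbb{R}}=P_0P_1$, precisely because $J_{\mathbb{R}}$ exchanges the radial direction with $\eta$ and preserves $V$ --- and then checks on the frame $\varepsilon,\eta,V$ that $J=\Phi^*J_{\mathbb{R}}$, so integrability is inherited by pullback; in effect the paper exhibits the holomorphic charts that you only infer abstractly. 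You instead verify the involutivity criterion by direct bracket computation, and your computations check out: the frame $Z=\varepsilon-\sqrt{-1}\,\eta$, $W_X=X-\sqrt{-1}\,P_0P_1X$ of $T^{1,0}N$ is the right one, $D_{W_X}\eta=P_0P_1W_X$ is correct, constancy of $P_0P_1$ does propagate the eigenvalue through ambient derivatives, and the isotropy $\langle W_X,W_Y\rangle=0$ of $(1,0)$-vectors under the bilinear extension is exactly what kills the $\eta$-components. One simplification you missed: the vanishing of the $\eta$-component is already forced by the type argument you had established, with no differentiation of orthogonality relations needed --- if $u=c\eta+v$ with $v\in V\otimes\mathbb{C}$ is tangent to $M_+$ and satisfies $P_0P_1u=\sqrt{-1}\,u$, then $P_0P_1u=-cx+P_0P_1v$ with $P_0P_1v\in V\otimes\mathbb{C}$, and pairing with the position vector $x$ (to which $\eta$ and $V$ are orthogonal) gives $-c=\langle P_0P_1u,x\rangle=\sqrt{-1}\,\langle u,x\rangle=0$. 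The trade-off between the two proofs: the paper's pullback argument is shorter and more conceptual, while yours is self-contained at the level of the Newlander--Nirenberg criterion and makes explicit where each structural fact (constancy of $P_0P_1$, tangency of $\eta$, $J_0$-invariance of $V$) enters; your closing observation that $J$ acts isometrically on the orthogonal splitting $\mathrm{Span}\{\varepsilon,\eta\}\oplus V$ matches the paper's justification of the Hermitian property.
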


\begin{proof}
We first recall that there always exists a Clifford system $\{P_0, P_1\}$ on $\R^{2n}$ as follows:
\begin{equation}\label{P0P1}
P_0=\begin{pmatrix} I_n&0\\0&-I_n\end{pmatrix},\quad P_1=\begin{pmatrix} 0&I_n\\I_n&0\end{pmatrix}.
\end{equation}
Therefore, there exists the following integrable almost complex structure $J_{\R}$ on $\R^{2n}$:
\begin{eqnarray*}
J_{\R}:\quad T_x\R^{2n}&\longrightarrow& T_x\R^{2n}\\
Y&\mapsto& P_0P_1Y.
\end{eqnarray*}


In the case $m=1$, $l=k\delta(m)=k$, we will show that $J$ in (\ref{J}) is just the almost complex structure induced from $\R^{2k}$, and is thus integrable.
Locally, we consider the embedding
\begin{eqnarray*}
\Phi:~~(S^1\setminus \{1\})\times M_+&\longrightarrow & \R^{2k}\\
(a, ~x)&\mapsto& a^{-\sqrt{-1}}x .
\end{eqnarray*}
One can check directly that 
$$J_{\R}\Phi_*(\varepsilon, 0)=\Phi_*(0, \eta)=\Phi_*(J(\varepsilon, 0))$$
 and
$$J_{\R}\Phi_*(0, X)=P_0P_1\Phi_*(0, X)=\Phi_*(0, P_0P_1X)= \Phi_*(J(0, X)) \;\;for\; X\in V.$$
Therefore, $J=\Phi^*J_{\R}$. 
Moreover, taking the metric on $M_+$ that is induced from the unit sphere, i.e., from the Euclidean space, it is obvious 
that $J$ is compatible with the product metric $ds^2$ on $S^1\times M_+$.
\end{proof}

According to \cite{FKM81}, $M_+$ of OT-FKM type is a sphere bundle over a sphere. Based on \cite{Wan88}, \cite{QTY21} determined when the sphere bundle is trivial. In particular, among the OT-FKM family with $m=1$, there are only two trivial cases: 
$M_+^{5}\cong S^3\times S^2$ in the case where $(m_1, m_2)=(1,2)$, and $M_+^{13}\cong S^7\times S^6$ in the case where $(m_1, m_2)=(1,6)$. Furthermore, the corresponding isoparametric hypersurfaces have diffeomorphisms 
 $M^{6}\cong S^1\times S^3\times S^2$ and $M^{14}\cong S^1\times S^7\times S^6$, respectively. As a corollary, we obtain

\begin{cor}\label{N14}
$S^1\times S^7\times S^6$ is a complex manifold.
\end{cor}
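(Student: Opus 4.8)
The plan is to derive Corollary \ref{N14} as a direct consequence of Theorem \ref{m1} together with the diffeomorphism facts recalled just before the statement. First I would invoke the classification data for the OT-FKM family with $m=1$: in the case $(m_1,m_2)=(1,6)$ one has $l=k\delta(1)=k$ with the relevant parameters forcing $2l-2=14$, and the cited results of \cite{FKM81}, \cite{Wan88}, \cite{QTY21} give the trivial-bundle diffeomorphisms $M_+^{13}\cong S^7\times S^6$ and hence $M^{14}\cong S^1\times M_+^{13}\cong S^1\times S^7\times S^6$. This identifies the underlying smooth manifold of $S^1\times M_+$ in this case precisely with $S^1\times S^7\times S^6$.

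Next I would apply Theorem \ref{m1} to this particular member of the family. Since $m=1$ here, the theorem produces an integrable almost complex structure $J$ on $S^1\times M_+$, making $(S^1\times M_+, J, ds^2)$ a Hermitian manifold. Because the almost complex structure $J$ is integrable, the Newlander--Nirenberg theorem guarantees that $J$ arises from a genuine holomorphic atlas, so $S^1\times M_+$ carries the structure of a complex manifold. Transporting this structure across the diffeomorphism $S^1\times M_+\cong S^1\times S^7\times S^6$ endows $S^1\times S^7\times S^6$ with a complex structure, which is exactly the assertion.

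I do not expect any genuine obstacle here, since the corollary is essentially a bookkeeping combination of an already-proved integrability theorem with a known diffeomorphism; the only point requiring a little care is to confirm that the dimension count $2l-2=14$ and the multiplicity pair $(m_1,m_2)=(1,6)$ are simultaneously consistent, i.e.\ that this trivial case really does sit inside the $m=1$ family to which Theorem \ref{m1} applies. Once that consistency is verified, the chain ``integrability of $J$ $\Rightarrow$ complex structure on $S^1\times M_+$ $\Rightarrow$ complex structure on the diffeomorphic manifold $S^1\times S^7\times S^6$'' completes the argument with no further computation.
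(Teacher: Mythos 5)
Your proposal is correct and takes essentially the same route as the paper: Corollary \ref{N14} is obtained there exactly by combining Theorem \ref{m1} with the diffeomorphisms $M_+^{13}\cong S^7\times S^6$ and $M^{14}\cong S^1\times S^7\times S^6$ drawn from \cite{FKM81}, \cite{Wan88}, \cite{QTY21}. Your appeal to Newlander--Nirenberg is sound but not strictly needed, since the proof of Theorem \ref{m1} already exhibits $J$ as the pullback of the standard complex structure on $\R^{2k}$ via local embeddings, which furnishes holomorphic charts directly.
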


\begin{rem}\label{metric}
When $m=1$, $M_+$ with the induced metric doesn't have non-negative sectional curvatures (\cite{QTY21}). Therefore,
$M_+$ with $(m_1, m_2)=(1,2)$ or $(1, 6)$ is not isometric to products of round spheres $S^3\times S^2$ or $S^7\times S^6$, no matter what the radii of the spheres are.
\end{rem}

Let $J_1$ be a complex structure (e.g., the Hopf structure) on $S^1\times S^3$, and $J_2$ be the canonical complex structure on $S^2$ whose first Chern class is twice a generator of the second cohomology. Then $(J_1, J_2)$ is a complex structure on $S^1\times S^3\times S^2$ whose first Chern class is not zero (cf. \cite{Uen75}). 
We 
will show that our complex structure $J$ in (\ref{J}) on $S^1\times M_+^5$, where $M_+^5$ (diffeomorphic to $S^3\times S^2$) is the focal submanifold of OT-FKM type  with $(m_1, m_2)=(1,2)$, is quite different from the product complex structure $(J_1, J_2)$.

\begin{prop}\label{12}
The Hermitian manifold $S^1\times M_+^5$ admits an orthonormal $U$-frame.
Thus, the first Chern class of $J$ in (\ref{J}) vanishes.
\end{prop}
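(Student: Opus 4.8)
The plan is to prove that the complex tangent bundle $(TN,J)$ of $N=S^1\times M_+^5$, which has complex rank $3$, is trivial as a Hermitian vector bundle. A global unitary frame is precisely an orthonormal $U$-frame, and triviality of $(TN,J)$ forces every Chern class — in particular $c_1(J)$ — to vanish, so both assertions follow at once.

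First I would record the $J$-invariant splitting built into (\ref{J}). Since $\varepsilon$ is a nowhere-zero section of $TS^1$ with $J\varepsilon=\eta$, the real $2$-plane field $\mathrm{Span}\{\varepsilon,\eta\}$ is a $J$-invariant, trivial complex line bundle $\underline{\C}$, and its orthogonal complement in $TN$ (with respect to the product metric $ds^2$) is exactly the sub-bundle $V$ on which $J=P_0P_1$. Hence, as complex bundles,
\begin{equation*}
(TN,J)\;\cong\;\underline{\C}\oplus (V,P_0P_1),
\end{equation*}
so it suffices to understand $V$. Over $M_+$ (here $l=4$, so the ambient space is $\R^{2l}=\R^{8}$) I would endow the trivial bundle $\underline{\R^{8}}$ with the \emph{constant} complex structure $\mathcal J:=P_0P_1$, which turns it into the trivial complex bundle $\underline{\C^{4}}$. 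The global nowhere-zero sections $x$ and $P_1x$ satisfy $\mathcal Jx=\eta$ and $\mathcal J(P_1x)=P_0x$, so $\mathrm{Span}\{x,\eta\}$ and $\mathrm{Span}\{P_1x,P_0x\}$ are $\mathcal J$-invariant trivial complex line bundles; by the Clifford relations they are mutually orthogonal and orthogonal to $V$, giving a $\mathcal J$-invariant orthogonal splitting
\begin{equation*}
\underline{\C^{4}}\;=\;\mathrm{Span}\{x,\eta\}\oplus\mathrm{Span}\{P_1x,P_0x\}\oplus V .
\end{equation*}
Therefore $V\oplus\underline{\C^{2}}\cong\underline{\C^{4}}$, and pulling back to $N$ yields $(TN,J)\oplus\underline{\C^{2}}\cong\underline{\C^{5}}$; that is, $(TN,J)$ is stably trivial.

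Finally I would remove the stabilization, and this is the one genuinely delicate step. Stable triviality alone is not enough: the rank-$2$ bundle $V$ over $M_+^{5}$ by itself can carry a $\mathbb{Z}/2$ secondary obstruction, so one must \emph{not} try to trivialize $V$ on $M_+^5$ directly. Instead I would run the argument on the full rank-$3$ bundle $(TN,J)$ over the $6$-manifold $N$, where $6=2\cdot 3$ sits exactly at the top of the stable range: the inclusion $U(3)\hookrightarrow U$ induces isomorphisms on $\pi_k$ for $k\le 5$, equivalently $BU(3)\to BU$ is an isomorphism on $\pi_k$ for $k\le 6$, so $[N,BU(3)]\to[N,BU]$ is a bijection once $\dim N=6$. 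Consequently a rank-$3$ complex bundle over $N$ is determined by its stable class, and the stably trivial $(TN,J)$ is in fact trivial. Choosing a global complex frame and applying complex Gram--Schmidt with respect to the Hermitian metric induced by $ds^2$ (Theorem \ref{m1}) then yields a global unitary — hence orthonormal $U$- — frame, and $c_1(J)=0$ follows. The crux, and the only place where real care is needed, is precisely this passage from stable to honest triviality, which succeeds only because the complex rank equals half of $\dim N$.
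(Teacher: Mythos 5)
Your proposal is correct, but it takes a genuinely different route from the paper. The paper's proof is entirely constructive: it transports $J$ through the diffeomorphism $\psi(y,z)=\tfrac{1}{\sqrt{2}}(y,zy)$ to an explicit almost complex structure $\widetilde{J}$ on $S^1\times S^3\times S^2$, writes down quaternionic vector fields $V_j, U_j$, computes the representing matrix $S$ in (\ref{S6}), conjugates it to $J_0$ by the explicit matrix $A=\begin{pmatrix}I_3&h\\0&I_3\end{pmatrix}$, pushes the frame back to $S^1\times M_+^5$ and renormalizes by hand to obtain $\{Y_1,\dots,Y_6\}$; the vanishing of $c_1$ is then deduced \emph{from the frame} via the exactness formula $c_1(J)=d\bigl(-\tfrac{1}{2\pi}\sum_j\omega_{j,j+3}\bigr)$ of \cite{Tan06}. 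You instead argue by bundle theory: the global Clifford sections $x$ and $P_1x$ exhibit $(V,P_0P_1)$ as a complement of $\underline{\C}^2$ in $\underline{\C}^4$ (your orthogonality checks all hold, since $P_0P_1$ is antisymmetric and $\langle P_ix,x\rangle=0$ on $M_+$), whence $(TN,J)$ is stably trivial; the destabilization succeeds because the complex rank $3$ is half of $\dim N=6$, and your connectivity bookkeeping is right ($U/U(3)$ is $6$-connected, so $[N,BU(3)]\to[N,BU]$ is bijective on complexes of dimension at most $6$), as is your caution about the rank-$2$ bundle $V$ over the $5$-manifold $M_+^5$ itself, where $\pi_4(U(2))=\mathbb{Z}/2$ genuinely obstructs a naive argument. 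The trade-offs: your argument avoids all quaternionic identities, gives $c_1(J)=0$ already from the stable splitting (no destabilization needed for that half), and generalizes verbatim to every $m=1$ member of the OT-FKM family, since a rank-$(l-1)$ complex bundle over the $(2l-2)$-manifold $S^1\times M_+$ again sits exactly at the edge of the stable range; so it proves that all these Hermitian manifolds admit orthonormal $U$-frames with vanishing first Chern class. What it does not produce is the explicit frame, which is the concrete output of the paper's computation. One small point to make explicit: run the Gram--Schmidt process with respect to the Hermitian metric determined by $ds^2$ and $J$ (compatibility is Theorem \ref{m1}), so that a unitary frame $\{Z_1,Z_2,Z_3\}$ yields the real orthonormal $U$-frame $\{Z_1,Z_2,Z_3,JZ_1,JZ_2,JZ_3\}$ in the paper's sense.
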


\begin{proof}
We will start with giving explicitly the almost complex structures $\widetilde{J}$ on $\widetilde{N}^6:=S^1\times S^3\times S^2$ and $\widetilde{N}^{14}:=S^1\times S^7\times S^6$, which are induced from $J$ on $N^6:=S^1\times M_+^5$ and $N^{14}:=S^1\times M_+^{14}$, respectively.
Here, we regard $S^3$, $S^7$ as unit spheres in $\mathbb{H}\cong\R^4$ and $\mathbb{O}\cong\R^8$, and $S^2$, $S^6$ as unit spheres in $\R^3:=\mathrm{Im} \mathbb{H}$ and $\R^7:=\mathrm{Im}\mathbb{O}$, respectively.

In the case $m=1$, we always take the Clifford system $\{P_0, P_1\}$ in (\ref{P0P1}), so the focal submanifold $M_+$ can be  expressed as
\begin{equation*}\label{M+}
M_+
= \left\{(u, v) \in \mathbb{R}^l\oplus\mathbb{R}^l~\vline~\begin{array}{ll}\langle u, v \rangle=0, \,\,|u|^2=|v|^2=\frac{1}{2}\end{array}\right\},
\end{equation*}
which is a Stiefel manifold. Constructing the 
diffeomorphism
\begin{eqnarray*}
\psi:~~S^7\times S^6&\longrightarrow& M_+\\
(y, ~~z)&\mapsto& \frac{1}{\sqrt{2}}(y, ~~zy),
\end{eqnarray*}
we derive the diffeomorphism
\begin{eqnarray*}
\Psi=\mathrm{id}\times\psi:~~\widetilde{N}&\longrightarrow& S^1\times M_+\\
(a,~~y, ~~z)&\mapsto& \big(a, ~~\frac{1}{\sqrt{2}}(y, ~~zy)\big).
\end{eqnarray*}
At $(y, z)\in S^7\times S^6$, for $(Y, Z)\in T_yS^7\times T_zS^6$, the tangent map $\psi_*$ is
$$\psi_*(Y, Z)=\frac{1}{\sqrt{2}}(Y, ~~Zy+zY).$$
Clearly, at $x:=\psi(y, z)\in M_+$, we have that $\eta(\psi(y,z))=\frac{1}{\sqrt{2}}(zy, -y)=\psi_*(zy, 0)$.
For simplicity, we will identify $\eta(\psi(y,z))$ with $(zy, 0)$, and still denote that $\eta:=(zy, 0)$.

For $(Y, Z)\in T_yS^7\times T_zS^6$ with $(Y, Z)\perp \eta$, i.e., $\langle Y, zy \rangle=0$, it follows from the Artin Theorem that $\langle zY, -y\rangle=-\langle \bar{z}y, Y\rangle=\langle zy, Y\rangle=0$, and thus that
$$\langle \psi_*(Y, Z), \eta\rangle=\langle\frac{1}{\sqrt{2}}(Y, Zy+zY), \frac{1}{\sqrt{2}}(zy, -y)\rangle=\frac{1}{2}\langle Y, zy\rangle+\frac{1}{2}\langle zY, -y\rangle=0.$$
We write explicitly the almost complex structure $\widetilde{J}$ on  $\widetilde{N}^{14}$ as
\begin{align}
\widetilde{J}(\varepsilon, ~~0, ~~0)&=(0,~~zy,~~0)\nonumber\\
\widetilde{J}(0,~~zy,~~0)&=(-\varepsilon, ~~0, ~~0)\label{JS}\\
\widetilde{J}(0, ~~Y, ~~Z)&=\Big(0, ~Zy+zY, ~-\big(z(Zy)\big)\bar{y}\Big),\,\,~~\textrm{if}~ \langle Y, zy\rangle=0.\nonumber
\end{align}
It is obvious that $\widetilde{J}$ is not Hermitian with respect to the product metric on $\widetilde{N}^{14}$.

In particular, since the multiplication on $\mathbb{H}$ satisfies the associativity,  $\widetilde{J}$ on $\widetilde{N}^6=S^1\times S^3\times S^2$ can be simplified to
\begin{align}
\widetilde{J}(\varepsilon, ~~0, ~~0)&=(0,~~zy,~~0)\nonumber\\
\widetilde{J}(0,~~zy,~~0)&=(-\varepsilon, ~~0, ~~0)\label{JS'}\\
\widetilde{J}(0, ~~Y, ~~Z)&=\Big(0, ~Zy+zY, ~-zZ\Big),\,\,~~\textrm{if}~ \langle Y, zy\rangle=0.\nonumber
\end{align}

Next, we will use (\ref{JS'}) to prove Proposition \ref{12}.

Let $\{1, e_1, e_2, e_3\}$ be the standard orthonormal basis of $\mathbb{H}$. We regard $S^2$ as $S^2\subset \R^3:=\mathrm{Im} \mathbb{H}=\mathrm{Span}\{e_1, e_2, e_3\}$; i.e., for any $z\in S^2$, $z=z_1e_2+z_2e_2+z_3e_3:=(z_1, z_2, z_3)$. Following \cite{PT05}, we define $6$ vector fields on $\widetilde{N}^6$ as follows: at $(a, y, z)\in \widetilde{N}^6$,
$$V_j:= \big( \langle e_j, z\rangle\sqrt{-1}a, ~0, ~e_j-\langle e_j, z\rangle z \big), \quad U_j:= ( 0, ~e_j y, ~0 ), \quad j=1,2,3.$$
One can check easily that $\{V_1, V_2, V_3, U_1, U_2, U_3\}$ is a (global) field of orthonormal frames on $\widetilde{N}^6$. Let $S$ be the representation matrix of $\widetilde{J}$ under $\{V_1, V_2, V_3, U_1, U_2, U_3\}$ 
\begin{equation*}\label{S}
\widetilde{J}(V_1, V_2, V_3, U_1, U_2, U_3)=(V_1, V_2, V_3, U_1, U_2, U_3)S.
\end{equation*}
Using the expression of $\widetilde{J}$ in (\ref{JS'}), we obtain that
$$\widetilde{J}V_j=\big( 0, ~e_jy, ~-ze_j-\langle e_j, z\rangle \big),\quad \widetilde{J}U_j=\big( -\langle e_j, z\rangle\sqrt{-1}a, ~ ze_jy+\langle e_j, z\rangle y, ~0 \big).
$$
Therefore, for $i, j=1, 2, 3$,
\begin{align*}
\langle \widetilde{J}V_j, V_i\rangle=\langle e_ie_j, z\rangle,\,\,\,\,~~ &\quad \langle \widetilde{J}V_j, U_i\rangle=\delta_{ij}\\
\langle \widetilde{J}U_j, U_i\rangle=-\langle e_ie_j, z\rangle, &\quad \langle \widetilde{J}U_j, V_i\rangle=-\langle e_i, z\rangle\langle e_j, z\rangle.
\end{align*}
Denoting that
$h(z):=\begin{pmatrix}0&z_3&-z_2\\-z_3&0&z_1\\z_2&-z_1&0\end{pmatrix}$, we can express the matrix $S$ as
\begin{equation}\label{S6}
S=\begin{pmatrix}h&-z^tz\\I&-h\end{pmatrix}.
\end{equation}
Moreover, it holds that $h^t=-h$, $hz^t=0$ and $h^2+I_3=z^tz$, which implies that $S^2=-I_6$.

Next, we will look for a $U$-frame on $S^1\times M_+^5$ by virtue of $S$.
Let $(X_1,\cdots, X_6)$ be a frame on $\widetilde{N}^6$ such that
$$\widetilde{J}(X_1,\cdots, X_6)=(X_1,\cdots, X_6)J_0,$$
where
$J_0=\begin{pmatrix}0&-I_3\\I_3&0\end{pmatrix}$.
Suppose that $(X_1,\cdots, X_6)=(V_1,V_2, V_3, U_1, U_2, U_3)A$.
Then we have that $$A^{-1}SA=J_0.$$
With $S$ expressed above, we find that $A=\begin{pmatrix}I_3&h\\0&I_3\end{pmatrix}$, with $A^{-1}=\begin{pmatrix}I_3&-h\\0&I_3\end{pmatrix}$, satisfies the equality above.
For convenience, we denote that $V:=(V_1, V_2, V_3)$ and $U:=(U_1, U_2, U_3)$, so
$$(X_1,\cdots,X_6)=(V_1,V_2, V_3, U_1, U_2, U_3)A=(V, ~~Vh+U).$$
Since
\begin{align*}
\psi_*V_j&=\big(\langle e_j, z\rangle\sqrt{-1}a, ~0, ~\frac{1}{\sqrt{2}}(e_j-\langle e_j, z\rangle z)y\big),\\
\psi_*U_j&=\big(0, ~\frac{1}{\sqrt{2}}e_jy, ~\frac{1}{\sqrt{2}}ze_jy\big),
\end{align*}
we can define a frame $\{X_1^*, \cdots, X_6^*\}$ on $S^1\times M_+^5$ as follows:
\begin{align*}
X_{j}^*&:=\psi_*(X_{j})=\big(\langle e_j, z\rangle\sqrt{-1}a, ~~0, ~~\frac{1}{\sqrt{2}}(e_jy-\langle e_j, z\rangle zy)\big), ~~\,j=1,2,3,\\
X_{j+3}^*&:=\psi_*(X_{j+3})=\psi_*(\sum_{k=1}^3V_kh_{kj}+U_j)=(0, ~~\frac{1}{\sqrt{2}}e_jy, ~~-\frac{1}{\sqrt{2}}z_jy), ~~\,j=1,2,3.
\end{align*}
A direct calculation leads to
$$\langle X_i^*, X_j^*\rangle=\langle X_{i+3}^*, X_{j+3}^*\rangle=\frac{1}{2}\delta_{ij}+\frac{1}{2}z_iz_j, ~~\,\, \langle X_i^*, X_{j+3}^*\rangle=0,~~\,\,1\leq i, j\leq 3.$$
Now we modify $X_{j}^*$ and set that 
\begin{align*}
Y_j&:=\Big(\langle e_j, z\rangle\sqrt{-1}a, ~0, ~\big(e_j-\langle e_j, z\rangle z\big)y\Big), \,\,\,\,1\leq j\leq 3,\\
(Y_1,\cdots, Y_6)&:=(Y_1, Y_2, Y_3, JY_1, JY_2, JY_3).
\end{align*}
It is clear that $\langle Y_A, Y_B \rangle =\delta_{AB}$ for $1\leq A, B\leq 6$. Therefore, $\{Y_1,\cdots, Y_6\}$ is the $U$-frame that we demanded.

Moreover, denote by $\omega_{AB}$ $(1\leq A,B\leq 6)$ the connection form corresponding to $Y_1,\cdots, Y_6$. According to \cite{Tan06}, on the Hermitian manifold $S^1\times M_+^5$, $\sum_{j}d\omega_{j, j+3}$ is a  closed $2$-form independent of the choice of frame. Furthermore, the first Chern class can be expressed by
$$c_1(J)=-\frac{1}{2\pi}\sum_{j=1}^3d\omega_{j, j+3}=d(-\frac{1}{2\pi}\sum_{j=1}^3\omega_{j, j+3}),$$
which is globally exact. Hence, the first Chern class of this $J$ vanishes.

\end{proof}
Recall that an almost Hermitian manifold is called \emph{almost K\"ahler} if the K\"ahler $2$-form $\Phi(X, Y):=\langle X, JY\rangle$ is closed, where $X, Y$ are smooth vector fields, and called \emph{nearly K\"ahler} if the $3$-covariant tensor $\nabla\Phi$ is totally antisymmetric. 
As the last result of this paper, we have

\begin{prop}\label{16}
On the complex manifold $S^1\times M_+^{13}$, where $M_+^{13}$ is the focal submanifold of OT-FKM type  with $(m_1, m_2)=(1, 6)$, the Hermitian structure is neither almost K\"ahler, nor nearly K\"ahler.
\end{prop}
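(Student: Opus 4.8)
The plan is to compute the Kähler two-form $\Phi$ explicitly on $S^1 \times M_+^{13}$ using the frame-based description of $\widetilde{J}$ established in the proof of Proposition \ref{12}, and then to test the two differential conditions directly. The natural starting point is to produce an orthonormal frame on $\widetilde{N}^{14} = S^1 \times S^7 \times S^6$ analogous to the fields $V_j, U_j$ used in the $(1,2)$ case, but now indexed by an orthonormal basis $\{e_1, \dots, e_7\}$ of $\mathrm{Im}\,\mathbb{O}$. The crucial difference from the associative $\mathbb{H}$ case is that the third component of $\widetilde{J}(0, Y, Z)$ in (\ref{JS}) involves $-\big(z(Zy)\big)\bar{y}$ rather than the simplified $-zZ$, because octonionic multiplication is non-associative. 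First I would record the Kähler form $\Phi(X,Y) = \langle X, \widetilde{J}Y\rangle$ on the explicit frame, obtaining a matrix of functions on $S^6$ (and on the $S^1, S^7$ factors) whose entries involve structure-constant-type expressions $\langle e_i e_j, z\rangle$ together with the non-associative correction terms coming from $(z(e_j y))\bar y$.

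Once $\Phi$ is written down, I would compute $d\Phi$ by differentiating its coefficient functions against the coframe dual to the chosen orthonormal frame, and exhibit a single nonzero component. To show $\Phi$ is not almost Kähler, it suffices to find one triple of frame vectors on which $d\Phi \neq 0$; the simplest candidates are triples lying entirely in the $S^6$-directions or mixing one $S^7$-direction with two $S^6$-directions, since the octonionic associator is what drives the obstruction. To rule out the nearly Kähler condition I would compute $\nabla \Phi$ (equivalently $(\nabla_X J)$) and show that the tensor $(\nabla_X \Phi)(Y,Z)$ fails to be totally antisymmetric, by finding vectors where $(\nabla_X \Phi)(Y,Z) \neq -(\nabla_Y \Phi)(X,Z)$ or where the symmetric part survives. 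A convenient shortcut is that almost Kähler and nearly Kähler both force strong algebraic constraints on $\nabla J$, so producing explicit vectors violating one symmetry of $\nabla \Phi$ simultaneously kills the nearly Kähler property, while the non-closedness of $\Phi$ kills almost Kähler; I would try to arrange a single well-chosen configuration of vectors that defeats both conditions at once.

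The main obstacle I anticipate is the bookkeeping of the non-associative octonionic terms when differentiating along $S^7$ and $S^6$: the covariant derivative on $\widetilde{N}^{14}$ splits as a product connection, but the map $\psi$ and the expression $(z(Zy))\bar y$ intertwine the $S^7$ and $S^6$ motions in a way that does not decouple, so the Christoffel contributions and the derivatives of the octonionic products must be handled carefully using the Moufang identities and the alternativity of $\mathbb{O}$ rather than naive associativity. The decisive computation is to verify that the would-be cancellations that occur in the $\mathbb{H}$ case — which there make the structure much tamer — genuinely fail over $\mathbb{O}$, and I expect the nonvanishing of $d\Phi$ to trace back precisely to a nonzero value of the octonionic associator $[z, e_j, y] := (z e_j)y - z(e_j y)$ evaluated on suitable frame vectors. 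Rather than grinding through all entries, I would isolate the associator-dependent part of $\Phi$, show its exterior derivative contains a term proportional to a nonvanishing associator, and conclude that neither $d\Phi = 0$ nor the total antisymmetry of $\nabla\Phi$ can hold.
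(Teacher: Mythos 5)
Your plan has two genuine defects, one in the setup and one in the guiding heuristic. First, the setup: you propose to read off the K\"ahler form as $\Phi(X,Y)=\langle X,\widetilde{J}Y\rangle$ from an orthonormal frame on $\widetilde{N}^{14}=S^1\times S^7\times S^6$ with its product metric, but the paper notes, immediately after (\ref{JS}), that $\widetilde{J}$ is \emph{not} Hermitian with respect to that product metric. So $\langle\,\cdot\,,\widetilde{J}\,\cdot\,\rangle$ computed in the round product metric is not even skew-symmetric and is not the K\"ahler form of the Hermitian structure of Theorem \ref{m1}, which lives on $S^1\times M_+^{13}$ with the metric induced from the sphere. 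To proceed along your lines you would have to drag $\psi^*$ of the induced metric through every computation, which destroys exactly the simplification you hoped to gain from transporting to the round factors. Second, the heuristic: your expectation that the obstruction ``traces back precisely to a nonzero octonionic associator,'' with cancellations making the associative $\mathbb{H}$ case tamer, is false. The identical conclusions hold in the associative case: $H^2_{DR}(S^1\times S^3\times S^2;\R)=0$ as well, and the paper's closing remark observes that \emph{every} $S^1\times M_+$ with $m=1$ fails to be nearly K\"ahler, including the quaternionic one. Isolating ``the associator-dependent part of $\Phi$'' would therefore have you hunting for the nonvanishing in the wrong place, and the ``decisive computation'' you describe is premised on a cancellation in the $\mathbb{H}$ case that does not occur.

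Both halves admit short arguments that your plan misses entirely. Almost K\"ahler is excluded purely topologically: $S^1\times M_+^{13}\cong S^1\times S^7\times S^6$ gives $H^2_{DR}(S^1\times M_+^{13},\R)=0$, and on a compact $14$-manifold a closed K\"ahler form would be exact, forcing the volume form $\Phi^7$ to be exact --- impossible. No component of $d\Phi$ needs to be computed; indeed your search for a nonzero component is guaranteed to succeed for \emph{any} almost Hermitian structure on this manifold, which is itself a signal that the octonionic fine structure is irrelevant to this half. For the nearly K\"ahler half, the paper tests $\nabla\Phi$ on the distinguished pair $(X,\eta)$ with $X\in V$ a unit vector: since $\eta=P_0P_1x$, one gets $\nabla_X\eta=(P_0P_1X)^T$, and $JX=P_0P_1X$ is already tangent, so $\nabla\Phi(X,\eta,X)=\langle JX,\nabla_X\eta\rangle=|P_0P_1X|^2=1\neq 0$, whereas total antisymmetry would force any repeated-argument evaluation to vanish. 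That is one line of Clifford algebra, frame-free and octonion-free. Your brute-force route is not beyond repair for the nearly K\"ahler statement, but as written it rests on the wrong metric and expects the nonvanishing to come from a source that the correct computation never uses.
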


\begin{proof}
The diffeomorphism $S^1\times M_+^{13}\cong S^1\times S^7\times S^6$ gives $H^2_{DR}(S^1\times M_+^{13}, \R)= 0$, and thus $S^1\times M_+^{13}$ is not almost K\"ahler.

The covariant derivative of the K\"ahler form $\Phi(X, Y):=\langle X, JY\rangle$ is
\begin{align*}
\nabla\Phi(X, Y, Z)&=(\nabla_Z\Phi)(X, Y)=Z\Phi(X, Y)-\Phi(\nabla_ZX, Y)-\Phi(X, \nabla_ZY)\\
&=\langle X, \nabla_Z(JY)-J\nabla_ZY\rangle.
\end{align*}
On $S^1\times M_+^{13}$, taking $X\in V\subset T_xM_+$ with $|X|=1$, it is apparent that
$$\nabla\Phi(X, \eta, X)= \langle X,  \nabla_X(J\eta)-J\nabla_X\eta\rangle\\
= \langle X, -\nabla_X\varepsilon-J\nabla_X\eta\rangle=\langle JX, \nabla_X\eta\rangle.$$
Noticing that
$\nabla_X\eta=\nabla_XP_0P_1x=(D_XP_0P_1x)^T=(P_0P_1X)^T,$
where $D$ is the Euclidean connection and $(\cdot)^T$ denotes the tangential projection to $M_+$, we derive that
$$\nabla\Phi(X, \eta, X)=\langle JX, (P_0P_1X)^T\rangle=\langle JX, P_0P_1X\rangle=|X|^2=1,$$
which implies that $\nabla\Phi$ is not totally antisymmetric, i.e., the Hermitian manifold $S^1\times M_+^{13}$ is not nearly K\"ahler.

\end{proof}
\begin{rem}
As a matter of fact, one can find from the proof above that all of the Hermitian manifolds $S^1\times M_+$ are not nearly K\"ahler.
\end{rem}

\begin{ack}
The second author would like to thank Professor Jianquan Ge and Doctor Yi Zhou for helpful discussions.
\end{ack}

\end{document}